\documentclass[12pt]{article}
\usepackage{amsmath}
\usepackage{amssymb}
\usepackage{amsthm}

\newtheorem{lemma}{Lemma}
\newtheorem{theorem}{Theorem}

\newcommand{\dist}{\mbox{\em dist\/}}

\setlength{\parindent}{0pt}
\newenvironment{proofof}[1]{\bigskip\noindent{\itshape #1. }}{\hfill$\Box$\medskip}

% -- automatic numbering of subclaims in proofs
\makeatletter
\newcounter{subclaim}
\newenvironment{subclaim}{\stepcounter{subclaim}\begin{itemize}\item[(\roman{subclaim})]\def\@currentlabel{(\roman{subclaim})}}{\end{itemize}}
\let\@oldproof=\proof\def\proof{\setcounter{subclaim}{0}\@oldproof} % reset counter at beginning of proof
\makeatother

\begin{document}

\begin{center}
{\bf\Large A De Bruijn--Erd\H os theorem\\ for chordal graphs}\\
\vspace{0.5cm}
Laurent Beaudou (Universit\' e Blaise Pascal, Clermont-Ferrand)\footnote{{\tt laurent.beaudou@ens-lyon.org}}\\
 Adrian Bondy (Universit\' e Paris 6)\footnote{{\tt adrian.bondy@sfr.fr}}\\
 Xiaomin Chen (Shanghai Jianshi LTD)\footnote{{\tt gougle@gmail.com}}\\
 Ehsan Chiniforooshan (Google, Waterloo)\footnote{{\tt chiniforooshan@alumni.uwaterloo.ca}}\\
 Maria Chudnovsky (Columbia University, New York)\footnote{{\tt mchudnov@columbia.edu}\\
\hphantom{lllllxxx}Partially supported by NSF grants DMS-1001091 and IIS-1117631}\\
 Va\v sek Chv\' atal (Concordia University, Montreal)\footnote{{\tt chvatal@cse.concordia.ca}\\
\hphantom{lllllxxx}Canada Research Chair in Combinatorial Optimization}\\
 Nicolas Fraiman (McGill University, Montreal)\footnote{{\tt nfraiman@gmail.com}}\\
 Yori Zwols (Concordia University, Montreal)\footnote{{\tt yzwols@gmail.com}}
\end{center}

\begin{center}
{\bf Abstract}
\end{center} 
\vspace{-0.3cm}
{\small A special case of a combinatorial theorem of De
  Bruijn and Erd\H os asserts that every noncollinear set of $n$
  points in the plane determines at least $n$ distinct lines. Chen and
  Chv\'{a}tal suggested a possible generalization of this assertion in
  metric spaces with appropriately defined lines. We prove this
  generalization in all metric spaces induced by connected chordal
  graphs.}

\section{Introduction}\label{sec.intro}

It is well known that 
\begin{subclaim} \label{claim.dbe}
{\em
every noncollinear set of $n$ points in the plane\\ 
determines at least $n$ distinct lines.\/}
\end{subclaim}
As noted by Erd\H{o}s~\cite{E43}, theorem~\ref{claim.dbe} is a corollary of the
Sylvester--Gallai theorem (asserting that, for every noncollinear set
$S$ of finitely many points in the plane, some line goes through
precisely two points of $S$); it is also a special case of a
combinatorial theorem proved later by De Bruijn and Erd\H os
\cite{DE48}.\\

Theorem \ref{claim.dbe} involves neither measurement of distances nor
measurement of angles: the only notion employed here is incidence of
points and lines. Such theorems are a part of {\em ordered geometry\/}
\cite{C61}, which is built around the ternary relation of {\em
  betweenness\/}: point $b$ is said to lie between points $a$ and $c$
if $b$ is an interior point of the line segment with endpoints $a$ and
$c$. It is customary to write $[abc]$ for the statement that $b$ lies
between $a$ and $c$. In this notation, a {\em line\/} $\overline{uv}$
is defined --- for any two distinct points $u$ and $v$ --- as
\begin{equation}\label{def.first}
\{u,v\}\;\cup\; \{p: [puv] \vee [upv]\vee [uvp]\}. 
\end{equation}

In terms of the Euclidean metric $\dist$, we have
\begin{multline}\label{def.btw}
\mbox{ $[abc] \;\Leftrightarrow\;$}\\
\mbox{ $a,b,c$ are three distinct points and
$\dist(a,b)+\dist(b,c) = \dist(a,c)$.}
\end{multline}
In an arbitrary metric space, equivalence (\ref{def.btw}) defines the
ternary relation of {\em metric betweenness} introduced in \cite{Men}
and further studied in \cite{Blu,Bus,Chv}; in turn, (\ref{def.first})
defines the line $\overline{uv}$ for any two distinct points $u$ and
$v$ in the metric space. The resulting family of lines may have
strange properties. For instance, a line can be a proper subset of
another: in the metric space with points $u,v,x,y,z$ and 
\begin{align*}
&\dist(u,v)=\dist(v,x)=\dist(x,y)=\dist(y,z)=\dist(z,u)=1,\\
&\dist(u,x)=\dist(v,y)=\dist(x,z)=\dist(y,u)=\dist(z,v)=2,
\end{align*}
we have
\[
\overline{vy}=\{v,x,y\} \;\;\mbox{ and }\;\;
\overline{xy}=\{v,x,y,z\}. 
\]

\bigskip 

Chen \cite{Che} proved, using a definition of $\overline{uv}$
different from~(\ref{def.first}), that the Sylvester--Gallai theorem
generalizes in the framework of metric spaces. Chen and Chv\'{a}tal
\cite{CC08} suggested that theorem \ref{claim.dbe}, too, might
generalize in this framework:
\begin{subclaim} \label{ccc}
{\em True or false? Every metric space on $n$ points, where $n\ge 2$,
  either has at least $n$ distinct lines or else has a line that
  consists of all $n$ points.}
\end{subclaim}
They proved that  
\begin{itemize}
\item every metric space on $n$ points either has at least $\lg n$
  distinct lines or else has a line that consists of all $n$ points
\end{itemize}
and noted that the lower bound $\lg n$ can be improved to $\lg n +
\frac{1}{2}\lg\lg n + \frac{1}{2}\lg\frac{\pi}{2}- o(1)$.\\

Every connected undirected graph induces a metric space on its vertex
set, where $\dist(u,v)$ is defined as the smallest number of edges in
a path from vertex $u$ to vertex $v$.  Chiniforooshan and Chv\'{a}tal
\cite{CC11} proved that
\begin{itemize}
\item every metric space induced by a connected graph on $n$
  vertices either has $\Omega(n^{2/7})$ distinct lines or else has a 
  line that consists of all $n$ vertices;
\end{itemize}
we will prove that the answer to \ref{ccc} is `true' for all metric
spaces induced by connected chordal graphs.
\begin{theorem}\label{main}
  Every metric space induced by a connected chordal graph on $n$
  vertices, where $n\ge 2$, either has at least $n$ distinct lines or
  else has a line that consists of all $n$ vertices.
\end{theorem}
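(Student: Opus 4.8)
The plan is to induct on $n$. The base case $n=2$ is immediate, since the unique line $\ov{uv}=\{u,v\}$ consists of all vertices. For the inductive step ($n\ge 3$) I would exploit the defining feature of chordal graphs: the existence of a \emph{simplicial} vertex $v$, one whose neighbourhood is a clique. Write $G'=G-v$ and $V'=V(G)\setminus\{v\}$. Because the neighbours of $v$ are pairwise adjacent, $v$ is not a cut vertex, so $G'$ is again a connected chordal graph, now on $n-1$ vertices, to which the inductive hypothesis applies.

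The backbone of the argument is that deleting $v$ perturbs the metric as little as possible. First I would record two facts. (i) For all $x,y\in V'$, $\dist_G(x,y)=\dist_{G'}(x,y)$: any shortest $x$--$y$ path entering and leaving $v$ through neighbours $a,b$ could be shortcut across the edge $ab$, so no geodesic between vertices of $V'$ needs $v$. (ii) For the same reason $v$ is never metrically between two vertices of $V'$, i.e.\ $[xvy]$ never holds for $x,y\in V'$. Together these give, for every pair $x,y\in V'$, that $\ov{xy}^{\,G}\cap V'=\ov{xy}^{\,G'}$, while membership of $v$ in $\ov{xy}^{\,G}$ is decided solely by whether $[vxy]$ or $[xyv]$ holds. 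Consequently the map sending a line of $G'$ to the $G$-line spanned by any of its generating pairs is well defined and \emph{injective}: distinct lines of $G'$ already differ on $V'$, so their lifts cannot coincide in $G$. Hence $G$ has at least as many lines as $G'$.

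Now apply induction to $G'$ and split into cases. If $G'$ has at least $n-1$ distinct lines, the injection gives $G$ at least $n-1$ lines, and it remains either to produce one further line or to detect a universal one. The natural source of an extra line is a pair through $v$: any $\ov{vw}^{\,G}$ contains $v$, and for a well-chosen $w$ (say a neighbour of $v$, or a vertex realizing a relevant extremal distance) I would argue that this line differs from every lifted line, or else is itself universal --- as happens, for instance, when two cliques share an edge and the line $\ov{cd}$ between their private vertices swallows the whole graph. If instead $G'$ only supplies a universal line $\ov{ab}^{\,G'}=V'$, then $\ov{ab}^{\,G}\supseteq V'$, and if $v$ lies on it we are done, since then $\ov{ab}^{\,G}=V(G)$.

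The hard part is exactly the remaining possibility: $G'$ has a universal line $\ov{ab}^{\,G'}=V'$, yet $v\notin\ov{ab}^{\,G}$. Here induction hands us a single line to lift, so $n$ distinct lines of $G$ must be built essentially from scratch, and this is where genuine chordal structure must enter. The aim would be to use the clique neighbourhood of $v$ together with the geodesic from $a$ to $b$ witnessing the universal line of $G'$, showing that the lines $\ov{vw}$, as $w$ ranges over the vertices of that geodesic, are forced to be pairwise distinct --- the key being to control how $\dist(v,w)=1+\min_{u\in N(v)}\dist(u,w)$ varies along the geodesic, using that $N(v)$ is a clique. Ruling out coincidences among these lines, and reconciling the count with the assumption $v\notin\ov{ab}^{\,G}$, is the principal obstacle; it is quite possible that one must strengthen the inductive statement --- tracking not merely ``many lines or one universal line'' but finer information about the lines through a prescribed vertex --- in order to drive the induction through.
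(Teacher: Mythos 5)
Your preliminary reductions are sound: a simplicial vertex $v$ is an interior vertex of no geodesic, so deleting it preserves all distances among the remaining vertices, $G-v$ is again a connected chordal graph, and for $x,y\in V'$ the line $\ov{xy}$ computed in $G$ meets $V'$ in exactly the line computed in $G-v$; hence $G$ has at least as many lines as $G-v$. (A small quibble: your ``lift'' is not well defined, since two pairs generating the same line of $G'$ may generate $G$-lines that disagree on whether they contain $v$; but injectivity of any fixed choice of representatives is all the count needs.) The real problem is that neither branch of the induction is closed. In the branch where $G'$ has at least $n-1$ lines you say only that ``for a well-chosen $w$'' the line $\ov{vw}$ should differ from every lifted line or be universal; but lifted lines can perfectly well contain $v$ (take $G$ a triangle $abc$ with a pendant vertex $v$ at $a$: then $\ov{ab}$ contains $v$), so containing $v$ separates nothing, and no actual argument is given. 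In the other branch --- $G'$ has a universal line but fewer than $n-1$ lines --- you concede outright that you do not know how to proceed; this branch is unavoidable (it already occurs when $G$ is a path), and there the inductive hypothesis supplies far too few lines to be of use, so essentially the whole theorem would have to be proved from scratch inside that case.

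The missing idea, which is where chordality genuinely enters, is supplied by the paper's Lemmas~\ref{lem1} and~\ref{lem2}: if $s$ is simplicial and two of the lines $\ov{sx},\ov{sy}$ coincide, then $x$ is a cut vertex separating $s$ from $y$ (a minimal-separator argument using the absence of long induced cycles), and consequently $\ov{xy}$ is the universal line. With this in hand no induction is needed at all: either the $n-1$ lines $\ov{sz}$ ($z\ne s$) are pairwise distinct --- and then, unless $s$ has a unique neighbour (in which case $\ov{su}$ is universal), the line $\ov{uv}$ on two neighbours $u,v$ of $s$ avoids $s$ because $s,u,v$ are pairwise adjacent, giving an $n$-th line --- or two of them coincide and Lemma~\ref{lem2} produces a universal line. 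This dichotomy is precisely the ``finer information about the lines through a prescribed vertex'' you suspected you would need; without a statement of that strength your induction does not go through.
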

For graph-theoretic terminology, we refer the reader to Bondy and
Murty\cite{BM}.

\section{The proof}\label{sec.proof}

Given an undirected graph, let us write $[abc]$ to mean that $a,b,c$
are three distinct vertices such that
$\dist(a,b)+\dist(b,c)=\dist(a,c)$; this is equivalent to saying that
$b$ is an interior vertex of a shortest path from $a$ to $c$.

\begin{lemma}\label{lem1}
  Let $s,x,y$ be vertices in a finite chordal graph such that
  $[sxy]$. If $\overline{sx}= \overline{sy}$, then $x$ is a cut vertex
  separating $s$ and $y$.
\end{lemma}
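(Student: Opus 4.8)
The plan is to prove the contrapositive: assuming $x$ is not a cut vertex separating $s$ and $y$, I will exhibit a vertex lying in exactly one of $\overline{sx}$ and $\overline{sy}$. Throughout I write $a=\dist(s,x)$ and $b=\dist(x,y)$, so that $[sxy]$ gives $\dist(s,y)=a+b$, and I fix a shortest $s$--$x$ path together with a shortest $x$--$y$ path, whose concatenation is a shortest $s$--$y$ path through $x$. The whole task then reduces to locating one witness vertex in the symmetric difference of the two lines.

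First I would record what the hypothesis $\overline{sx}=\overline{sy}$ forces purely at the level of betweenness, with no appeal to chordality. A short triangle-inequality computation shows that the ``$s$ is between'' parts of the two lines always coincide: using $[sxy]$, a vertex $p$ with $[psx]$ satisfies $[psy]$ and conversely. Hence equality of the lines is equivalent to equality of the remaining parts, and this yields two clean necessary conditions. Condition (i): every vertex strictly between $s$ and $y$ must be collinear with $s$ and $x$ (it must satisfy one of $[psx]$, $[spx]$, $[sxp]$); in particular a vertex at distance $a$ from $s$ that is strictly between $s$ and $y$ but distinct from $x$ is forced to have $\dist(p,x)=2a$. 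Condition (ii): every vertex $p$ with $[sxp]$ must satisfy $[xpy]$, $p=y$, or $[syp]$ — that is, every vertex lying beyond $x$ as seen from $s$ must actually lie on a shortest $x$--$y$ path or beyond $y$. Any vertex violating (i) or (ii) is exactly the witness I want.

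Next I would use the failure of separation, together with chordality, to manufacture such a violator. Non-separation supplies an $s$--$y$ path avoiding $x$; together with the two edges of the fixed path incident to $x$ it produces a cycle $Z$ through $x$ whose two neighbours of $x$ sit at distances $a-1$ and $a+1$ from $s$ and are therefore non-adjacent, so $Z$ has length at least $4$. Here chordality enters: triangulating $Z$ (equivalently, invoking that a minimal separator between the two sides is a clique) repeatedly shortcuts the cycle until a chord incident to $x$ appears, exposing two adjacent neighbours $\alpha,\beta$ of $x$ on the bypass. Note also that $y$ itself satisfies $[sxy]$, hence lies beyond $x$, so walking the bypass from $s$ there is a first vertex $r$ with $[sxr]$, whose predecessor is not beyond $x$. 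Analysing the distances to $s$ and $y$ of this crossing vertex $r$ (or of the adjacent pair $\alpha,\beta$) should pin down a vertex that is reached from $s$ through $x$ yet lies too far from $y$ to be funnel-aligned, contradicting (ii); in degenerate cases it instead yields a strictly-between vertex off the $s$--$x$ line with $\dist(p,x)\ne 2a$, contradicting (i).

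The hard part will be precisely this last conversion: turning ``some path avoids $x$'' into a witness whose distances to both $x$ and $y$ are controlled tightly enough to flip one betweenness relation while preserving the other. Raw non-separation is not enough — the pentagon $C_5$ already exhibits two lines that differ without any separating cut vertex, and, more to the point, there are chordless configurations in which a bypass exists but runs ``parallel'' to $x$, so that no single betweenness relation is decided. Chordality is exactly what excludes this, by forcing the bypass to admit chords and hence a short, distance-controlled shortcut near $x$. Making this reduction rigorous, rather than the routine betweenness bookkeeping of the second paragraph, is where the real difficulty lies.
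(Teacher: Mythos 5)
There is a genuine gap here, and you identify it yourself: the step you defer as ``where the real difficulty lies'' --- converting the existence of an $s$--$y$ path avoiding $x$ into a single vertex whose distances to $s$, $x$ and $y$ are all pinned down --- is the entire content of the lemma, and your sketch never produces that vertex. ``Triangulate the bypass cycle until a chord incident to $x$ appears'' does not by itself control where the resulting neighbours of $x$ sit relative to the distance levels from $s$, so the case analysis you promise (``should pin down a vertex that\dots'') cannot be carried out from what you have. A secondary but real error: your preliminary claim that, given $[sxy]$, the relations $[psx]$ and $[psy]$ coincide by the triangle inequality alone is false in one direction. From $[psy]$ one does get $[psx]$, but $[psx]$ together with $[sxy]$ does not force $[psy]$ in a general graph metric (in $C_5$ with consecutive vertices $p,s,x,y$ one has $[psx]$ and $[sxy]$ but $\dist(p,y)=2\neq 3$). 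So even the ``routine bookkeeping'' paragraph needs repair, although your Condition (i) --- a vertex $p\neq x$ with $[spy]$ and $\dist(s,p)=\dist(s,x)=a$ must have $\dist(p,x)=2a$ --- is correct and is in fact the same observation the paper uses to close its argument.

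The missing idea is \emph{which} bypass to look at. The paper does not argue by contrapositive from an arbitrary $x$-avoiding path; it starts from the level set $\{u:\dist(s,u)=\dist(s,x)\}$, which always separates $s$ from $y$, and passes to a \emph{minimal} separator $C$ inside it. Minimality gives every vertex of $C$ a neighbour in the $s$-side component and one in the $y$-side component; if $C$ contained some $u\neq x$, chordless $u$--$x$ paths through each side form a cycle whose chords force $ux$ to be an edge and then force a vertex $v$ on the $y$-side adjacent to both $u$ and $x$. Because $u$, $x$ and $v$ now have known distances to $s$ (namely $i$, $i$, $i+1$), a short computation shows $v\in\overline{sx}=\overline{sy}$, hence $\dist(v,y)=j-1$, hence $u\in\overline{sy}$, while $\dist(s,u)=\dist(s,x)$ and $\dist(u,x)=1$ give $u\notin\overline{sx}$ --- your Condition (i). Restricting to a minimal separator within a single distance level is precisely the device that supplies the distance control your sketch lacks; without it, the reduction you describe in your third paragraph cannot be made rigorous.
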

\begin{proof}
  The set of all vertices $u$ such that $\dist(s,u)=\dist(s,x)$
  separates $s$ and $y$. Among all its subsets that separate $s$ and
  $y$, choose a minimal one and call it $C$. Since $x$ is an interior
  vertex of a shortest path from $s$ to $y$, it belongs to $C$. To
  prove that $C$ includes no other vertex, assume, to the contrary,
  that $C$ includes a vertex $u$ other than $x$.

\medskip

  Our graph with $C$ removed has distinct connected components $S$ and
  $Y$ such that $s\in S$ and $y\in Y$; the minimality of $C$
  guarantees that each of its vertices has at least one neighbour in
  $S$ and at least one neighbour in $Y$. Since each of $u$ and $x$ has
  at least one neighbour in $S$, there is a path from $u$ to $x$ with
  at least one interior vertex and with all interior vertices in $S$.
  Let $P$ be a shortest such path; note that $P$ has no chords except
  possibly the chord $ux$. Similarly, there is a path $Q$ from $u$ to
  $x$ with at least one interior vertex, and with all interior
  vertices in $Y$, that has no chords except possibly the chord
  $ux$. The union of $P$ and $Q$ is a cycle of length at least four;
  since this cycle must have a chord, vertices $u$ and $x$ must be
  adjacent. In turn, the union of $Q$ and $ux$ is a chordless cycle,
  and so $Q$ has precisely two edges. This means that some vertex $v$
  in $Y$ is adjacent to both $u$ and $x$.

\medskip

  Write $i=\dist(s,x)$ and $j=\dist(x,y)$.  Since all vertices $t$
  with $\dist(s,t)<i$ belong to $S$ and since $v$ has no neighbours in
  $S$, we must have $\dist(s,v)>i$; since $\dist(x,v)=1$, we conclude
  that $\dist(s,v)=i+1$ and that $v\in \overline{sx}$. Since
  $\overline{sx}=\overline{sy}$, it follows that $v\in \overline{sy}$.
  Since $\dist(v,x)=1$ and $\dist(x,y)=j$, we have
  $\dist(v,y)\le j+1$. From $\dist(s,v)=i+1$, $\dist(s,y)=i+j$,
  $\dist(v,y)\le j+1$, $i\ge 1$, $j\ge 1$, and $v\in \overline{sy}$, we
  deduce that $\dist(v,y)=j-1$.

\medskip

  Since $\dist(u,v)=1$, it follows that $\dist(u,y)\le j$; since
  $\dist(s,u)=i$ and $\dist(s,y)=i+j$, we conclude that $\dist(u,y)=j$
  and $u\in \overline{sy}$. Since $\dist(s,u)=i$, $\dist(s,x)=i$, and
  $\dist(u,x)=1$, we have $u\not\in \overline{sx}$. But then
  $\overline{sx}\ne \overline{sy}$, a contradiction.
\end{proof}

A vertex of a graph is called {\em simplicial\/} if its neighbours are pairwise adjacent.
\begin{lemma}\label{lem2}
  Let $s,x,y$ be three distinct vertices in a finite connected chordal
  graph. If $s$ is simplicial and $\overline{sx}= \overline{sy}$, then
  $\overline{xy}$ consists of all the vertices of the graph.
\end{lemma}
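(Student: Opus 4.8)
The plan is to first pin down the betweenness relation among $s,x,y$, then invoke Lemma~\ref{lem1} to produce a cut vertex, and finally show that every vertex of the graph $G$ lies on $\overline{xy}$ by distance bookkeeping. The hypothesis that $s$ is simplicial will be used precisely once, to discard the ``bad'' betweenness configuration.

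First I would extract betweenness information from $\overline{sx}=\overline{sy}$. Since $x\in\overline{sx}=\overline{sy}$ and $y\in\overline{sy}=\overline{sx}$, and $s,x,y$ are distinct, one of $[xsy]$, $[sxy]$, $[syx]$ must hold. The point of simpliciality is to eliminate $[xsy]$: if $[xsy]$ held, let $x'$ be the neighbour of $s$ on a shortest $s$--$x$ path and $y'$ the neighbour of $s$ on a shortest $s$--$y$ path. Both are neighbours of $s$, so, $s$ being simplicial, either $x'=y'$ or $x'y'$ is an edge; concatenating the two shortest paths through $x'$ and $y'$ then yields an $x$--$y$ walk of length at most $d(s,x)+d(s,y)-1$, contradicting $d(x,y)=d(s,x)+d(s,y)$. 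Hence $[xsy]$ is impossible. Since the hypotheses and the conclusion are symmetric in $x$ and $y$ (note $[syx]$ becomes $[sxy]$ under the swap), I may assume $[sxy]$.

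Now Lemma~\ref{lem1} applies directly and gives that $x$ is a cut vertex separating $s$ and $y$; write $Y$ for the component of $G-x$ containing $y$. I would then show that every vertex $p\notin\{x,y\}$ lies in $\overline{xy}$ (the vertices $x,y$ lying in $\overline{xy}$ by definition), splitting on the location of $p$. If $p$ lies outside $Y$ (in particular if $p=s$, or $p$ is anywhere in $s$'s component, or in any further component of $G-x$), then $x$ lies on every $p$--$y$ path, so $d(p,y)=d(p,x)+d(x,y)$, i.e.\ $[pxy]$, whence $p\in\overline{xy}$. This case is immediate from the cut-vertex property and uses nothing further.

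The remaining case, $p\in Y$, is the crux and is where I must feed the identity $\overline{sx}=\overline{sy}$ back in, since the cut vertex now yields no direct relation between $p$ and the pair $x,y$. Because $x$ separates $p$ from $s$, I have $d(s,p)=d(s,x)+d(x,p)$, so $[sxp]$ holds and therefore $p\in\overline{sx}=\overline{sy}$. Thus one of $[psy]$, $[spy]$, $[syp]$ holds; using $d(s,y)=d(s,x)+d(x,y)$ (from $[sxy]$) together with $d(s,p)=d(s,x)+d(x,p)$, the relation $[psy]$ forces $2\,d(s,x)\le 0$ and is impossible, while $[spy]$ reduces to $d(x,p)+d(p,y)=d(x,y)$, i.e.\ $[xpy]$, and $[syp]$ reduces to $d(x,p)=d(x,y)+d(y,p)$, i.e.\ $[xyp]$. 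In every surviving case $p\in\overline{xy}$, so $\overline{xy}$ is the whole vertex set. I expect this last case to be the only real obstacle: eliminating $[xsy]$ is where simpliciality does its work, and everything after Lemma~\ref{lem1} is routine distance arithmetic, except for the $Y$-side, where the line identity must be pushed through a short case analysis.
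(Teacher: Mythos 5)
Your proof is correct and follows essentially the same route as the paper's: simpliciality rules out $[xsy]$, the assumption reduces to $[sxy]$, Lemma~\ref{lem1} supplies the cut vertex $x$, and the cut-vertex property combined with $\overline{sx}=\overline{sy}$ places every remaining vertex on $\overline{xy}$. The only cosmetic differences are that you phrase the dichotomy via the components of $G-x$ rather than via a concatenated pair of shortest paths, and you exclude the configuration $[psy]$ by distance arithmetic where the paper invokes simpliciality a second time.
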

\begin{proof}
Since $\overline{sx}= \overline{sy}$, we have $y\in\overline{sx}$, and
so $[ysx]$ or $[syx]$ or $[sxy]$; since $s$ is simplicial, $[ysx]$ is
excluded; switching $x$ and $y$ if necessary, we may assume that
$[sxy]$. Given an arbitrary vertex $u$, we have to prove that
$u\in\overline{xy}$. Let $P$ be a shortest path from $s$ to $u$ and
let $Q$ be a shortest path from $u$ to $y$. Lemma~\ref{lem1}
guarantees that $x$ is a cut vertex separating $s$ and $y$, and so the
concatenation of $P$ and $Q$ must pass through $x$. This means that
$[sxu]$ or $[uxy]$ (or both). If $[uxy]$, then $u\in\overline{xy}$; to
complete the proof, we may assume that $[sxu]$, and so
$u\in\overline{sx}$.

\medskip

Since $\overline{sx}=\overline{sy}$, we have $[usy]$ or $[suy]$ or
$[syu]$; since $s$ is simplicial, $[usy]$ is excluded.  If $[suy]$, then
$[sxu]$ implies $[xuy]$; if $[syu]$, then $[sxy]$ implies $[xyu]$; in
either case, $u\in\overline{xy}$.
\end{proof}

\begin{proofof}{Proof of Theorem~\ref{main}} Consider a connected chordal 
graph on $n$ vertices where $n\ge 2$. By a theorem of Dirac~\cite{D},
this graph has at least two simplicial vertices; choose one of them
and call it $s$.  We may assume that the lines $\overline{sz}$ with
$z\ne s$ are pairwise distinct (else some line consists of all $n$
vertices by Lemma~\ref{lem2}). Since the graph is connected and has at
least two vertices, $s$ has at least one neighbour; choose one and
call it $u$. If $u$ is the only neighbour of $s$, then every path from
$s$ to another vertex must pass through $u$, and so $\overline{su}$
consists of all $n$ vertices.  If $s$ has a neighbour $v$ other than
$u$, then line $\overline{uv}$ is distinct from all of the $n-1$ lines
$\overline{sz}$ with $z\ne s$: since $s,u,v$ are pairwise adjacent, we
have $s\not\in\overline{uv}$.
\end{proofof}

\section{Related theorems}\label{sec.conc}

In Theorem~\ref{main}, `connected chordal graph' can be replaced by `connected bipartite graph':
\begin{itemize}
\item   every metric space induced by a connected bipartite graph on $n$
  vertices, where $n\ge 2$, has a line that consists of all $n$ vertices.
\end{itemize}

In fact, $\overline{xy}$ consists of all $n$ vertices whenever $x$ and
$y$ are adjacent. To prove this, consider an arbitrary vertex
$u$. Since the graph is bipartite, $\dist(u,x)$ and $\dist(u,y)$ have
distinct parities; since $\dist(x,y)=1$, they differ by at most
one. We conclude that $\dist(u,x)$ and $\dist(u,y)$ differ by
precisely one, and so $u\in\overline{xy}$.

\medskip

In Theorem~\ref{main}, `connected chordal graph' can be also replaced
by `sufficiently large graph of diameter two': Chiniforooshan and
Chv\'{a}tal \cite{CC11} proved that
\begin{itemize}
\item every metric space on $n$ points where each nonzero distance
  equals $1$ or $2$ has $\Omega(n^{4/3})$ distinct lines and
  this bound is tight.
\end{itemize}

\bigskip

{\bf\Large Acknowledgment}

\bigskip

\noindent The work whose results are reported here began at a workshop
held at Concordia University in June 2011.  We are grateful to the
Canada Research Chairs program for its generous support of this
workshop. We also thank Luc Devroye, Fran\c cois Genest, and Mark
Goldsmith for their participation in the workshop and for stimulating
conversations.

\end{document}